\documentclass[12pt]{amsart}

\usepackage{amsmath,amsthm,amsfonts,amssymb,mathrsfs}
\usepackage[margin=1in]{geometry}
\usepackage[hyperfootnotes=false]{hyperref}
\usepackage{verbatim}
\usepackage{todonotes}
\usepackage{fancyvrb }
\usepackage{color}
\hyphenpenalty 10000

\newcommand{\Z}{\mathbb{Z}}

\newtheorem{theorem}{Theorem}[section]

\newtheorem{proposition}[theorem]{Proposition}

\newtheorem{conjecture}[theorem]{Conjecture}

\newtheorem{corollary}[theorem]{Corollary}

\newtheorem{lemma}[theorem]{Lemma}
\input xy
\xyoption{all}

\theoremstyle{definition}

\newtheorem{remark}[theorem]{Remark}

\DeclareMathOperator{\opH}{H}
\DeclareMathOperator{\ext}{Ext}

\begin{document}

\title[Restricted partitions and $SL_2$-cohomology]{Restricted partitions and $SL_2$-cohomology}

\author{Steven Benzel}
\address{Department of Mathematics\\ University of North Georgia--Gainesville \\ Oakwood, GA~30566, USA}
\email{Steven.Benzel@ung.edu}

\author{Scott Conner}
\address{Department of Mathematics\\ University of North Georgia--Gainesville \\ Oakwood, GA~30566, USA}
\email{SKCONN5042@ung.edu}

\author{Nham Ngo}
\address{Department of Mathematics\\ University of North Georgia--Gainesville \\ Oakwood, GA~30566, USA}
\email{nvngo@ung.edu}

\author{Khang Pham}
\address{Department of Mathematics\\ University of North Georgia--Gainesville \\ Oakwood, GA~30566, USA}
\email{KVPHAM6466@ung.edu}

\date{\today}

\maketitle

\begin{abstract}
The aim of this paper is twofold. First, we study the number of partitions of a positive integer $m$ into at most $n$ parts in a given set $A$. We prove that such a number is bounded by the $n$-th Fibonacci number $F(n)$ for any $m$ and some family of sets $A$ including sets of powers of an integer. Then, in the second part of the paper, we provide new results in bounding the cohomology of the simple algebraic group $SL_2$ with coefficients in Weyl modules. 
\end{abstract}

\section{Introduction}
Let $A$ be a subset of $\Z^+$ and $m$ in $\Z^+$. A {\it restricted partition} of $m$ with parts in $A$ is a decomposition
\begin{align}\label{partitionform}
 m=\alpha_1+\alpha_2+\cdots+\alpha_t 
\end{align}
where $\alpha_i$s are not necessarily distinct elements in $A$ and $\alpha_1\le\alpha_2\le\cdots\le\alpha_t$. Each $\alpha_i$ is called a {\it part} of the partition, and $t$ is the number of parts in the partition. For example, let $A_2=\{2^i:i\in\mathbb N \}$ the set of all powers of 2. Then $5$ has 4 restricted partitions in $A_2$: $1+1+1+1+1, 1+1+1+2, 1+2+2,$ and $1+4.$ These are called $2$-ary partitions of $5$. In this paper, we are interested in restricted partitions of the form \eqref{partitionform} where $t$ is at most $n$ for some given positive integer $n$. We call them partitions of $m$ into at most $n$ parts from $A$ and denote $P(m,n,A)$ the number of such partitions.
Note that when $A=\Z^+$, i.e., there is no restriction for parts of partitions, computing $P(m,n,A)$ is a classical problem. Some first results on small values of $n$ date back to the 19$^{th}$ century by Herschel, Cayley, and Sylvester \cite{WW}. Recently, some progress has been made by \cite{K},\cite{M}, and \cite{O}. In terms of an arbitrary set $A$, not much is known for $P(m,n,A)$. One may refer to \cite{CW} and \cite{A} for a discussion on the number of some related partitions. In Section \ref{partition} we show that for any set $A$ satisfying a mild growth condition, $P(m,n,A)$ is bounded from above by the $n$-th Fibonacci number $F(n)$ for all $m,n\in\Z^+$. This bound is universal in the sense that it does not depend on the integer $m$. As a consequence, our result deduces an upper bound for the number of $q$-ary partitions of $m$ into at most $n$ parts for any positive integer $q \ge 2$. 

Our work on restricted partitions is motivated by studying the cohomology of the simple algebraic group $SL_2$ defined over an algebraically closed field $k$ of prime characteristic $p$. Note that, in general, the cohomology of algebraic groups is widely unknown and only a few cases have been computed explicitly, see \cite{Jan} for a summary of this theory. Although the case of $SL_2$ has been extensively studied, there are still open problems. For example, determining a closed formula of the dimension of the cohomology for a simple (or indecomposable) module remains unknown. In fact, it is already a challenging task to find a sharp upper bound for this number. For the last few years, the third author has been interested in bounding the dimension of $\opH^n(SL_2,V(m))$, the $SL_2$-cohomology for the Weyl module $V(m)$ of highest weight $m\in\mathbb N$. This problem is of independent interest and has its own history \cite{EHP}, \cite{LNZ}. Together with Lux and Zhang, the third author was able to identify $\dim\opH^n(SL_2,V(m))$ with the number of solutions to a system of linear equations, hence attaining a rough upper bound, see \cite[Section 4]{LNZ} for details. In Section \ref{cohomology} we use results from Section \ref{partition} to show that for any $m,n\in\mathbb N$ 
\[ \dim\opH^n(SL_2,V(m))\le
\begin{cases}
F(n+1)~~~&\text{if}~~~p\ge 5,\\
F(2n) &\text{if}~~~p=2, 3
\end{cases}
 \]
which significantly improves the bounds in \cite[Cor. 4.3, Prop. 4.4, and Thm. 4.6]{LNZ}. 

It is worth noting that there is a desire of finding explicit universal bounds (only depending on the degree $n$) for any simple algebraic group, see \cite{B et al.} for a survey on this open problem. Currently, we are not able to generalize our results (for $SL_2$) to arbitrary algebraic groups. However, we suspect that there might be some connection between the dimension of cohomology of an algebraic group and the number of restricted vector partitions\footnote{A vector partition is a way of writing a vector with nonnegative integer entries as a sum of other vectors (with nonnegative integer entries) where the order of summands does not matter.} (a generalization of restricted integer partitions). Hence, it is reasonable to ask whether there exists a universal upper bound for the latter. This would be an interesting problem for future research.

\section{Restricted partitions of $m$ into at most $n$ parts}\label{partition}

In this section, we study the number $P(m,n,A)$ for various sets $A$. We aim to bound this number using the Fibonacci numbers. For the sake of our calculations, we first identify these partitions with integer solutions of a certain system of equations. In particular, 
we write $A=\{a_1,a_2,\ldots \}$ with $0<a_1<a_2<\cdots$. Let $m\in\mathbb Z^+$ and $a_r$ the largest number in $A$ that is no more than $m$. Then each restricted partition of the form \eqref{partitionform} can be rewritten as
\[ m=x_1a_1+x_2a_2+\cdots+x_ra_r  \]
where $x_i$s are non-negative integers. Each $x_i$ is called the {\it multiplicity} of $a_i$ in the partition. As multiplicities are allowed to be zeros, every partition of $m$ is uniquely determined by a sequence $\{x_i\}$ satisfying 
\[ m=\sum_{i=1}^\infty x_ia_i \]
where $x_i=0$ for large enough $i$ (such that $a_i>m$). Thus, the number of restricted partitions of $m$ is equal to the number of such sequences $\{x_i\}$. Now, if we require the number of parts in every partition of $m$ to be at most $n$, then $P(m,n,A)$ is in fact equal to the number of sequences $\{x_i\}$ of non-negative integers such that 
\[
\begin{cases}
\displaystyle{\sum_{i=1}^\infty x_ia_i=m,}\\
\displaystyle{\sum_{i=1}^\infty x_i\le n}.
\end{cases}
\]
Solutions to the last system are essentially the same as tuples $(x_1,x_2,\ldots,x_r)$ satisfying
\begin{align}\label{gen system}
\begin{cases}
\displaystyle{x_1a_1+x_2a_2+\cdots+x_ra_r=m,}\\
\displaystyle{x_1+x_2+\cdots+x_r\le n}
\end{cases}
\end{align}
for sufficiently large $r$. From now on, we identify $P(m,n,A)$ with the number of solutions to the system \eqref{gen system}.

We next recall the definition of Fibonacci sequence. Let 
\[F(1)=1,\quad F(2)=1, \quad  F(n)=F(n-1)+F(n-2), \]
for $n\ge 2$. We also assume $F(i)=0$ for all integers $i\le 0$. Our inductive proofs will use the following special property of the Fibonacci sequence.

\begin{lemma}\label{lem}
For each positive integer $n$, we have
\[  \sum\limits_{i=0}^nF(2i+1)= F(2n+2)\quad\quad~~~~~~~~\text{and}~~~~~~~~~\quad\quad\sum\limits_{i=0}^nF(2i)= F(2n+1)-1. \]
Consequently, we always have $$\displaystyle \sum\limits_{i=0}^{\lfloor \frac{n}{2}\rfloor}F(n-2i)\le F(n+1).$$
\end{lemma}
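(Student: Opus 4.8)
The plan is to prove the two summation identities by induction on $n$ and then derive the final inequality as a direct consequence. For the first identity $\sum_{i=0}^n F(2i+1) = F(2n+2)$, the base case $n=1$ reads $F(1)+F(3) = 1+2 = 3 = F(4)$, which checks out. For the inductive step, assuming $\sum_{i=0}^{n-1} F(2i+1) = F(2n)$, I would add $F(2n+1)$ to both sides and use the defining recurrence $F(2n) + F(2n+1) = F(2n+2)$. The second identity is handled identically: the base case $n=1$ gives $F(0)+F(2) = 0+1 = 1 = F(3)-1$, and the inductive step adds $F(2n)$ to $\sum_{i=0}^{n-1}F(2i) = F(2n-1)-1$, again invoking the recurrence $F(2n-1)+F(2n) = F(2n+1)$. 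These are entirely routine; the only mild care needed is the convention $F(i)=0$ for $i \le 0$, which makes the $n=1$ base cases (and a possible $n=0$ base case) behave correctly.

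For the ``Consequently'' clause, I would split into the two parities of $n$ and reindex the sum $\sum_{i=0}^{\lfloor n/2 \rfloor} F(n-2i)$ so that it matches one of the two identities just proved. If $n = 2k+1$ is odd, the terms $F(n), F(n-2), \ldots$ run through $F(2k+1), F(2k-1), \ldots, F(1)$, i.e. exactly the odd-indexed Fibonacci numbers $F(2j+1)$ for $j = 0, \ldots, k$; by the first identity this sum equals $F(2k+2) = F(n+1)$, so in fact equality holds. If $n = 2k$ is even, the terms are $F(2k), F(2k-2), \ldots, F(0)$, the even-indexed ones $F(2j)$ for $j=0,\ldots,k$; by the second identity this equals $F(2k+1) - 1 = F(n+1) - 1 \le F(n+1)$. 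In both cases the claimed bound holds (with equality precisely when $n$ is odd).

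I do not anticipate a genuine obstacle here — the statement is a packaging of standard Fibonacci telescoping identities. The one place to be careful is bookkeeping in the reindexing step: making sure the floor $\lfloor n/2 \rfloor$ lines up with the upper limit of the reindexed sum in each parity, and confirming that the smallest term is $F(1)$ (odd case) or $F(0)=0$ (even case) rather than something out of range. A clean way to avoid any off-by-one confusion is to state the substitution explicitly, e.g. for $n=2k+1$ set $j = k-i$ so that $n-2i = 2j+1$ and $i$ ranging over $0,\ldots,k$ corresponds to $j$ ranging over $k,\ldots,0$. With that substitution written out, both cases reduce immediately to the identities, and the proof is complete.
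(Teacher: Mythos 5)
Your proof is correct: the two identities follow by the induction-plus-recurrence argument you describe, and the parity split with the reindexing $j=k-i$ cleanly reduces the ``Consequently'' inequality to those identities (with equality for odd $n$ and a deficit of $1$ for even $n$). The paper omits the proof entirely, calling it straightforward, and your argument is exactly the standard one the authors intend, consistent with their conventions $F(1)=F(2)=1$ and $F(i)=0$ for $i\le 0$.
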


Proof of the lemma is straightforward. We now prove the following 

\begin{theorem}\label{restricedpar}
Let $A=\{a_1,a_2,\ldots\}\subseteq\mathbb Z^+$ where all $a_i$s are written in an increasing order and satisfy
\begin{align}\label{condition*}
 2a_{s-1}+4a_{s-2}+6a_{s-3}+\cdots+2(s-1)a_1<a_s
\end{align}
for all $s\ge 2$. Then $P(m,n,A)\le F(n)$ for all positive integers $m, n$.
\end{theorem}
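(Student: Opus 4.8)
The plan is to induct on $n$, the bound on the number of parts, proving the statement $P(m,n,A) \le F(n)$ simultaneously for all $m \in \Z^+$. The base cases are $n=1$, where $P(m,1,A) \le 1 = F(1)$ since a partition into at most one part is just $m = a_i$ for some $i$ (at most one choice), and $n=2$, where one checks directly that $P(m,2,A) \le 1 = F(2)$: if $m = a_i + a_j$ and also $m = a_k + a_\ell$ with, say, $i \le j$ and $k \le \ell$ and $j < \ell$, then $a_\ell > a_j \ge a_i \ge$ (contribution needed), and the growth condition \eqref{condition*} with $s = \ell$ forces $a_\ell > 2a_{\ell-1} \ge a_i + a_k$, contradicting $a_\ell \le m = a_i + a_j \le a_k + a_\ell$ being matched; the point is that the largest part used is forced to be the same in any two such representations, and then induction on a smaller target finishes it.

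For the inductive step, fix $m$ and $n \ge 3$, and let $a_r$ be the largest element of $A$ with $a_r \le m$. I would classify the solutions $(x_1,\dots,x_r)$ of \eqref{gen system} according to the multiplicity $x_r$ of the largest available part $a_r$. If $x_r = 0$, the remaining solution is a partition of $m$ into at most $n$ parts from $A \setminus \{a_r\}$ (equivalently with available parts $a_1,\dots,a_{r-1}$), which still satisfies \eqref{condition*}; if $x_r = j \ge 1$, then $(x_1,\dots,x_{r-1}, x_r - 1)$ is a partition of $m - a_r$ into at most $n-1$ parts. The subtlety is that the growth condition \eqref{condition*} severely limits how large $x_r$ can be: if $x_r \ge 2$ then $2a_r \le m$, but $m = \sum x_i a_i \le n\, a_r$... rather, the useful direction is that the \emph{other} parts can contribute at most their total, and \eqref{condition*} says $a_r$ is so much larger than $a_{r-1} + \cdots + a_1$ that in fact $x_r$ can take only the values $0$ or (roughly) up to $\lfloor n/2\rfloor$-many possibilities before the residual target becomes too small — this is exactly what produces the Fibonacci-with-skips sum $\sum_{i} F(n - 2i)$ that Lemma~\ref{lem} bounds by $F(n+1)$, and one wants instead the sharper recursion $P(m,n,A) \le P(m, n, A') + P(m - a_r, n-1, A) \le F(n-1) + F(n-2) = F(n)$ by isolating just the $x_r = 0$ versus $x_r \ge 1$ dichotomy and re-examining the $x_r \ge 1$ case carefully.

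More precisely, in the case $x_r \ge 1$ I would argue that once one copy of $a_r$ is removed, the condition \eqref{condition*} (applied at index $s = r$) guarantees that the residual partition of $m - a_r$ into at most $n-1$ parts cannot itself use $a_r$ more than a controlled number of times, and iterating this peeling shows the $x_r \ge 1$ solutions inject into partitions of a strictly smaller integer with strictly fewer parts, giving the clean two-term recursion $P(m,n,A) \le F(n-1) + F(n-2)$. The main obstacle is verifying that the two pieces of this dichotomy are genuinely disjoint and genuinely exhaustive while keeping the part-count bookkeeping exact — in particular ensuring that dropping $a_r$ from $A$ does not break \eqref{condition*} (it does not, since deleting the largest element leaves the remaining inequalities untouched) and that the map "subtract one $a_r$" lands in partitions with at most $n-1$ parts and not merely at most $n$. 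Once the recursion $P(m,n,A) \le F(n-1) + F(n-2) = F(n)$ is established with correct base cases $n = 1, 2$, the induction closes and Lemma~\ref{lem} is not even needed for this particular theorem (it will be used for the cohomological applications in Section~\ref{cohomology}).
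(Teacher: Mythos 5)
Your base cases match the paper's (condition \eqref{condition*} forces $a_s\ne 2a_i$ and $a_s\ne a_i+a_j$, so $P(m,2,A)\le 1$), but your inductive step has a genuine gap. You split solutions into $x_r\ge 1$ and $x_r=0$ and assert the ``clean'' recursion $P(m,n,A)\le P(m-a_r,n-1,A)+P(m,n,A')\le F(n-1)+F(n-2)$, where $A'=A\setminus\{a_r\}$. The first summand is fine: induction on $n$ gives $P(m-a_r,n-1,A)\le F(n-1)$. But the second summand, the count of solutions with $x_r=0$, is a partition problem with the \emph{same} part budget $n$, so your induction on $n$ alone says nothing about it; you would need to prove separately that it is at most $F(n-2)$, and you never do. You also locate the difficulty in the wrong place: you spend effort worrying about whether the residual partition after removing one $a_r$ can reuse $a_r$ (which is harmless --- that case is already absorbed into $P(m-a_r,n-1,A)$), while the actual work is in the $x_r=0$ branch.

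The missing idea is the quantitative consequence of \eqref{condition*}: if $x_r=0$ and moreover $x_{r-1}\le 2$, $x_{r-2}\le 4$, \dots, $x_1\le 2(r-1)$, then $\sum_i x_ia_i\le 2a_{r-1}+4a_{r-2}+\cdots+2(r-1)a_1<a_r\le m$, a contradiction. Hence every solution satisfies at least one of $x_r\ge 1$, $x_{r-1}\ge 3$, $x_{r-2}\ge 5$, \dots, $x_1\ge 2r-1$, and peeling off $(2j+1)$ copies of $a_{r-j}$ reduces the part budget by $2j+1$, giving
\[
P(m,n,A)\le P(m-a_r,n-1,A)+P(m-3a_{r-1},n-3,A)+\cdots\le F(n-1)+F(n-3)+\cdots\le F(n)
\]
by exactly the ``Fibonacci-with-skips'' estimate of Lemma~\ref{lem} that you claim is not needed. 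It is needed: the tail $F(n-3)+F(n-5)+\cdots\le F(n-2)$ is precisely what makes your $x_r=0$ branch work out to $F(n-2)$. So your two-term dichotomy is a correct reorganization of the count, but closing it requires the odd-multiplicity argument and Lemma~\ref{lem}, neither of which appears in your proposal.
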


\begin{proof}
We prove by induction on $n$. Obviously, $P(m,1,A)=0$ or $1$, so that it is true for $n=1$. Let's take a look further to the case $n=2$ \footnote{Since $F(1)=F(2)=1$, it is necessary to show the base case with $n=1, 2,$ for otherwise we would have no idea whether $P(m,1,A)$ is bounded by $F(1)$ or $F(2)$. We will need to do the same for other inductive proofs in this paper.}. We also have $P(m,2,A)\le 1$ because for each $m$ and $a_s$, the largest number in $A$ such that $a_s\le m$, the condition \eqref{condition*} guarantees that $a_s$ is not equal to $2a_i$ nor $a_i+a_j$ for any $1\le i,j\le s-1$. Assume that $P(m,t,A)\le F(t)$ for any $t<n$ with $n\ge 3$. Now fix $m$ and again let $a_s$ be the largest number in $A$ such that $a_s\le m$. Since $2a_{s-1}+4a_{s-2}+6a_{s-3}+\cdots+2(s-1)a_1< a_s\le m$, we must have either one (or more) of the following
\[ x_{s}\ge 1, x_{s-1}\ge 3, x_{s-2}\ge 5, \ldots, x_{1}\ge 2s-1.  \]
Solutions of \eqref{gen system} combined with $x_{s}\ge 1$ are essentially solutions of 
\begin{align*}
\begin{cases}
\displaystyle{x_1a_1+x_2a_2+\cdots+x_{s-1}a_{s-1}+x'_sa_s=m-a_{s},}\\
\displaystyle{x_1+x_2+\cdots+x_{s-1}+x'_s\le n-1}\\
\end{cases}
\end{align*}
by setting $x_{s}=1+x'_{s}$. The number of such solutions is $P(m-a_{s},n-1,A)$. Similarly, numbers of solutions to \eqref{gen system} with other conditions are respectively $P(m-3a_{s-1},n-3,A),\ldots,$ and $P(m-(2s-1)a_{1},n+1-2s,A)$. Hence, we obtain
\[ P(m,n,A)\le P(m-a_{s},n-1,A)+P(m-3a_{s-1},n-3,A)+\cdots+P(m-(2s-1)a_{1},n+1-2s,A). \]
Next, applying our inductive hypothesis and Lemma \ref{lem} we have
\[ P(m,n,A)\le F(n-1)+F(n-3)+\cdots+F(n+1-2s)\le F(n), \]
hence completing our inductive proof. 
\end{proof}

We present some examples of the set $A$ in the above theorem. 

\begin{theorem}\label{A_q>3}
Let $q$ be an integer greater than 3 and set $A_q=\{q^i:i\in\mathbb N\}$. Then $A_q$ satisfies the condition \eqref{condition*}. Consequently,
\[ P(m,n,A_q)\le F(n). \]
\end{theorem}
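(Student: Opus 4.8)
The plan is to verify condition \eqref{condition*} directly for $A_q=\{q^i:i\in\mathbb N\}$ and then invoke Theorem \ref{restricedpar}. Writing $a_i=q^{i-1}$ so that $a_s=q^{s-1}$, the condition to check for each $s\ge 2$ is
\[
\sum_{j=1}^{s-1} 2j\, a_{s-j} \;=\; \sum_{j=1}^{s-1} 2j\, q^{s-1-j} \;<\; q^{s-1}.
\]
Dividing through by $q^{s-1}$, this is equivalent to $\sum_{j=1}^{s-1} 2j\, q^{-j} < 1$, and since the summand only grows as $s$ increases, it suffices to bound the full infinite series $\sum_{j=1}^{\infty} 2j\, q^{-j}$ and show it is less than $1$ whenever $q\ge 4$.

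The key step is the closed-form evaluation $\sum_{j=1}^{\infty} j\, x^{j} = x/(1-x)^2$ for $|x|<1$, applied with $x=1/q$. This gives $\sum_{j=1}^{\infty} 2j\, q^{-j} = 2q/(q-1)^2$. So the whole argument reduces to checking the elementary inequality $2q/(q-1)^2 < 1$, i.e. $2q < (q-1)^2 = q^2 - 2q + 1$, i.e. $q^2 - 4q + 1 > 0$. The roots of $q^2-4q+1$ are $2\pm\sqrt 3$, so this holds for $q > 2+\sqrt 3 \approx 3.73$, and in particular for every integer $q\ge 4$. This confirms \eqref{condition*} for all $s\ge 2$, and then $P(m,n,A_q)\le F(n)$ follows immediately from Theorem \ref{restricedpar}.

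I do not anticipate a genuine obstacle here; the content is entirely the geometric-type series estimate above, and the only mild care needed is the passage from the finite sum (which is what \eqref{condition*} literally asks for) to the infinite series — but since all terms are positive, the finite sum is dominated by the infinite one, so the bound transfers without issue. It is worth remarking explicitly why the hypothesis $q>3$ (rather than $q\ge 2$) is needed: for $q=2$ and $q=3$ one has $2q/(q-1)^2 = 4$ and $3/2$ respectively, both exceeding $1$, so condition \eqref{condition*} genuinely fails and a separate analysis of $A_2$ and $A_3$ (presumably carried out elsewhere in the paper) is required.
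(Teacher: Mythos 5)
Your proof is correct, but it takes a different computational route from the paper's. The paper verifies \eqref{condition*} by induction on $s$: it shows $2q^{s}+4q^{s-1}+\cdots+2(s+1)<q^{s+1}$ by peeling off the inductive bound for the previous $s$ together with a geometric series estimate ($\mathrm{LHS}<3q^s+2(q^{s-1}+\cdots+1)<4q^s\le q^{s+1}$, using $q\ge 4$). You instead normalize by $a_s=q^{s-1}$, dominate the finite sum by the full arithmetico-geometric series, and evaluate it in closed form as $\sum_{j\ge 1}2j\,q^{-j}=2q/(q-1)^2$, reducing everything to the quadratic inequality $q^2-4q+1>0$. Both arguments are elementary and complete; yours has the advantage of being induction-free and of exposing the exact threshold $q>2+\sqrt{3}$, which makes transparent why the hypothesis is $q>3$ and why $A_2$ and $A_3$ must be handled separately (as the paper does in Remark \ref{counter-example for p=2} and Theorem \ref{A23}), whereas the paper's induction only certifies $q\ge 4$ without revealing where the method breaks down. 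The one point to state carefully is the passage from the finite sum in \eqref{condition*} to the infinite series, which you do justify by positivity of the terms; similarly, your observation that the series exceeds $1$ for $q=2,3$ shows the finite sums eventually exceed $1$, so \eqref{condition*} indeed fails for those sets for some $s$, consistent with the paper's explicit counterexample $4=2\cdot 2=2\cdot 1+1\cdot 2$.
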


\begin{proof}
It suffices to show that for any $s\ge 1$
\[ 2q^{s-1}+4q^{s-2}+\cdots+ 2(s-1)q+2s<q^s. \]
This can be proven by induction on $s$. Indeed, it is easy to see that it's true for $s=1$. Now to show that  
\[ 2q^{s}+4q^{s-1}+\cdots+ 2sq+2(s+1)<q^{s+1},  \]
note that
\begin{align*}
2q^{s}+&4q^{s-1}+\cdots+ 2sq+2(s+1)\\
&=2q^s + (2q^{s-1} + 4q^{s-2} + \cdots + 2(s - 1)q + 2s) + (2q^{s-1} + 2q^{s-2} + \cdots + 2q + 2)\\
&<2q^s + q^s + 2(q^{s-1} + q^{s-2} + \cdots + q + 1)~~~\text{by induction}\\
&\le 3q^s+q^s-1<4q^s\le q^{s+1}~~~\text{using}~ q > 3~ \text{at multiple points}.
\end{align*}
This proves our induction proof. The remainder follows immediately from Theorem \ref{restricedpar}.
\end{proof}

\begin{remark}\label{counter-example for p=2}
The sets $A_2$ and $A_3$ do not satisfy \eqref{condition*}. Moreover, the inequality in the above theorem doesn't hold for $A_2$ as we have
\[ 4=1\cdot 2^2=2\cdot 2=2\cdot 1+1\cdot 2.  \]
Hence, $P(4,3,A_2)=3>F(3)$. 
\end{remark}

We next modify the condition on the set $A$ in the Theorem \ref{restricedpar} so that $A_2$ and $A_3$ will satisfy it. 

\begin{theorem}\label{A23}
Let $A=\{a_1,a_2,\ldots\}\subseteq\mathbb Z^+$ where all $a_i$s are written in an increasing order and satisfy
\begin{align}\label{condition23}
a_{s-1}+2a_{s-2}+3a_{s-3}+4a_{s-4}+\cdots+(s-1)a_1<a_{s+1}
\end{align}
for all $s\ge 2$. Then $P(m,n,A)\le F(2n-1)$ for all positive integers $m, n$.
\end{theorem}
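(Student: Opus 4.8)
The plan is to adapt the proof of Theorem~\ref{restricedpar}: argue by induction on $n$, identifying $P(m,n,A)$ with the number of solutions of the system~\eqref{gen system}. The base case $n=1$ is trivial, since $P(m,1,A)\in\{0,1\}$ and $F(1)=1$. The case $n=2$ — where one must actually verify $P(m,2,A)\le F(3)=2$, not merely $\le F(1)$ — has to be treated directly, and I will say more about it below. So assume $n\ge 3$, and that $P(m',t,A)\le F(2t-1)$ for every positive $t<n$ and every $m'$.

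For the inductive step, fix $m$ and let $a_s$ be the largest element of $A$ with $a_s\le m$, so that $P(m,n,A)$ counts the nonnegative integer solutions of $x_1a_1+\dots+x_sa_s=m$ with $x_1+\dots+x_s\le n$. The cases $s\le 2$ are immediate ($s=1$ gives $P(m,n,A)\le 1$; $s=2$ gives $P(m,n,A)\le 2F(2n-3)\le F(2n-1)$), so suppose $s\ge 3$. Playing the role that~\eqref{condition*} plays in Theorem~\ref{restricedpar}, the key point is that every solution satisfies at least one of
\[
x_s\ge 1,\qquad x_{s-1}\ge 1,\qquad x_{s-2}\ge 2,\qquad x_{s-3}\ge 3,\qquad\dots,\qquad x_1\ge s-1;
\]
for if none of these holds then $x_s=x_{s-1}=0$ and $x_{s-k}\le k-1$ for $2\le k\le s-1$, whence $m=\sum_i x_ia_i\le a_{s-2}+2a_{s-3}+\dots+(s-2)a_1$, which by~\eqref{condition23} at index $s-1$ is strictly less than $a_s\le m$, a contradiction. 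Substituting $x_s=1+x_s'$ in the first branch and $x_{s-k}=k+x_{s-k}'$ in the $k$-th of the remaining branches identifies that branch with the solution set counted by $P(m-a_s,n-1,A)$, respectively $P(m-ka_{s-k},\,n-k,\,A)$; as the branches cover all solutions,
\[
P(m,n,A)\ \le\ P(m-a_s,\,n-1,\,A)\ +\ \sum_{k=1}^{s-1}P\bigl(m-ka_{s-k},\,n-k,\,A\bigr).
\]
Each summand is now bounded using the inductive hypothesis: a term with $n-k<0$ vanishes, a term with $n-k\ge1$ is $\le F(2(n-k)-1)$ (if its first argument is positive) or equals $1\le F(2(n-k)-1)$ (if its first argument is $0$), and the only genuinely dangerous case — a term $P(0,0,A)=1$ with $n=k$ — is excluded, because $n=k$ together with~\eqref{condition23} at index $s-1$ forces $a_s>(2n-3)a_{s-n}\ge na_{s-n}=m$ (here $n\ge3$ is used), contradicting $a_s\le m$. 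Hence
\[
P(m,n,A)\ \le\ F(2n-3)+\sum_{k=1}^{s-1}F(2n-2k-1)\ =\ F(2n-3)+\bigl(F(2n-3)+F(2n-5)+\dots+F(2n-2s+1)\bigr),
\]
and by the ``Consequently'' clause of Lemma~\ref{lem} (applied with $2n-3$ in place of $n$, and recalling $F(i)=0$ for $i\le 0$) the bracketed sum is $\le F(2n-2)$; thus $P(m,n,A)\le F(2n-3)+F(2n-2)=F(2n-1)$, closing the induction.

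The step I expect to be the main obstacle is the base case $n=2$, namely $P(m,2,A)\le 2$ for all $m$. A partition of $m$ into at most two parts is either a singleton $\{a_i\}$ or a pair $\{a_i,a_j\}$ with $i\le j$, and there is at most one singleton; the work is to rule out, from~\eqref{condition23}, that $m$ admits two distinct pair-representations together with a singleton, or three distinct pair-representations. In Theorem~\ref{restricedpar} the corresponding claim was immediate because~\eqref{condition*} flatly forbids $a_s=a_i+a_j$, whereas~\eqref{condition23} is deliberately weaker — it is designed to admit $A_2$ and $A_3$, in which such coincidences do occur (for instance $4=2+2$ in $A_2$, i.e.\ $a_{s+1}=a_s+a_s$) — so the bound on the number of pair-representations of a fixed $m$ must be extracted carefully from~\eqref{condition23} evaluated at small indices. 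This is also exactly why the induction cannot be started at $n=2$: the degenerate branch $x_{s-2}\ge 2$ above reduces to $P(\,\cdot\,,0,A)$, which the Fibonacci recursion does not control, and the argument that rules out the bad case $m=2a_{s-2}$ needs $2n-3\ge n$, which fails at $n=2$.
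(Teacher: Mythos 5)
Your inductive step is essentially the paper's own argument: the paper calls the largest element of $A$ not exceeding $m$ by the name $a_{s+1}$ rather than $a_s$, but the branching into the cases $x_{s+1}\ge 1$, $x_s\ge 1$, $x_{s-1}\ge 2,\dots$, the resulting recursion, and the Fibonacci summation are identical to yours. The extra bookkeeping you supply --- discarding branches with negative part-budget and, crucially, excluding the degenerate term $P(0,0,A)=1$ via the estimate $a_s>(2n-3)a_{s-n}\ge na_{s-n}=m$ for $n\ge 3$ --- is care the paper omits but the argument genuinely needs, since the inductive hypothesis $P(m',t,A)\le F(2t-1)$ controls nothing at $t=0$ while $F(-1)=0$.

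The genuine gap is the base case $n=2$, which you correctly single out as the main obstacle and then never prove. It cannot be proved: condition \eqref{condition23} does not imply $P(m,2,A)\le F(3)=2$. Take $a_1=1$, $a_2=3$, $a_3=5$, $a_4=6$, and continue with $a_5=15$, $a_6=30,\dots$ growing fast enough that \eqref{condition23} holds for $s\ge 4$; the conditions at $s=2,3$ read $1<5$ and $3+2\cdot 1=5<6$, so \eqref{condition23} holds throughout, yet $6=6=1+5=3+3$ gives $P(6,2,A)=3>2$. So your suspicion that the bound on two-part representations ``must be extracted carefully from \eqref{condition23}'' understates the problem: it does not follow at all, and the paper's ``Obviously, it is true for $n=1,2$'' is the precise point at which its own proof (and, as stated, the theorem) fails. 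The statement survives for the intended applications $A_2$ and $A_3$, where $P(m,2,A_q)\le 2$ can be checked directly from uniqueness of base-$q$ representations (the only collision being $2\cdot 2^i=2^{i+1}$ when $q=2$), so Corollary \ref{p=2,3} is safe once that base case is added by hand; but to prove Theorem \ref{A23} as written one would need either $P(m,2,A)\le 2$ as an additional hypothesis or a strengthening of \eqref{condition23} at small indices. In short: your inductive step is sound and more careful than the paper's, but the proposal is not a complete proof, and the missing piece is not fillable from the stated hypotheses.
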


\begin{proof}
It is straightforward to check that it is true for $n=1, 2$. Assume that $P(m,t,A)\le F(2t-1)$ for any $t<n$ with $n\ge 3$. Now fix $m$ and let $a_{s+1}$ be the largest number in $A$ such that $a_{s+1}\le m$. From the condition \eqref{condition23}, we must have either one (or more) of the following
\[ x_{s+1}\ge 1, x_s\ge 1, x_{s-1}\ge 2, x_{s-2}\ge 3, \ldots, x_{1}\ge s.  \]
Now the argument goes very similar with that of Theorem \ref{restricedpar}. We then obtain
\[ P(m,n,A)\le P(m-a_{s+1},n-1,A)+P(m-a_{s},n-1,A)+\cdots+P(m-sa_{1},n-s,A). \]
Next, applying our inductive hypothesis and Lemma \ref{lem} we have
\begin{align*}
P(m,n,A) &\le F(2n-3)+F(2n-3)+F(2n-5)+\cdots+F(2n-2s-1)\\
&\le F(2n-3)+F(2n-2)=F(2n-1),
\end{align*}
hence completing our inductive proof. 
\end{proof}

\begin{corollary}\label{p=2,3}
For $m, n\in\mathbb Z^+$, we have $P(m,n,A_q)\le F(2n-1)$ for $q\ge 2$.
\end{corollary}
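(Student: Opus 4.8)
The plan is to reduce Corollary \ref{p=2,3} to Theorem \ref{A23} by verifying that the sets $A_2$ and $A_3$ satisfy the growth condition \eqref{condition23}. Concretely, writing $A_q=\{q^i : i\in\mathbb N\}$ with $a_i = q^{i-1}$, the condition \eqref{condition23} for the index $s$ becomes, after shifting indices,
\[
\sum_{j=1}^{s-1} j\,q^{\,s-1-j} \;<\; q^{\,s},
\]
which is equivalent (dividing by $q^{s-1}$, or just bounding directly) to the single inequality
\[
q^{s-2}+2q^{s-3}+3q^{s-4}+\cdots+(s-1)\cdot q^{0} \;<\; q\cdot q^{\,s-1}=q^{s}.
\]
So the whole task is the elementary estimate $\sum_{j=1}^{s-1} j\,q^{\,s-1-j}<q^{s}$ for all $s\ge 2$ and for $q=2,3$.

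First I would prove this by induction on $s$, mirroring the argument in the proof of Theorem \ref{A_q>3}. The base case $s=2$ reads $1<q$, true for $q\ge 2$. For the inductive step, I would write the sum for $s+1$ as $\sum_{j=1}^{s} j\,q^{\,s-j}$, peel off and regroup: $\sum_{j=1}^{s} j\,q^{\,s-j} = q\sum_{j=1}^{s-1} j\,q^{\,s-1-j} + \sum_{j=1}^{s} q^{\,s-j}$, where the first sum is $<q\cdot q^{s}=q^{s+1}$ by the hypothesis but that alone overshoots, so instead I would keep the coefficient explicit and bound the tail geometric series $\sum_{j=1}^{s} q^{s-j} = \frac{q^{s}-1}{q-1}\le q^{s}-1$. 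The cleanest route is to show the slightly stronger statement $\sum_{j=1}^{s-1} j\,q^{\,s-1-j}\le q^{s}-q$ (equivalently the LHS plus $q$ stays below $q^s$), which feeds the induction cleanly: assuming it for $s$, the value at $s+1$ is at most $q(q^{s}-q) + (q^{s}-1) = q^{s+1}-q^{2}+q^{s}-1$, and for $q\ge 2$ one has $q^{s}\le q^{2}\cdot q^{s-2}\cdot$\dots\ — more simply $q^{s}-q^{2}\le 0$ when $s=2$ and for $s\ge 3$ one checks $q^{s}-1 \le q^{2}-q$ fails, so I should instead carry the sharper inductive invariant tuned so that the $q^{s}$ term is absorbed; the correct bookkeeping is to prove $\sum_{j=1}^{s-1} j q^{s-1-j} < q^{s} - \tfrac{q^{s}}{q-1}$ or, most robustly, just to verify the inequality directly by summing the closed form.

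Actually the most transparent approach avoids strengthening: the closed form is $\sum_{j=1}^{s-1} j\,q^{\,s-1-j} = \dfrac{q^{s} - s\,q + (s-1)}{(q-1)^{2}}$, obtained from the standard identity $\sum_{j\ge 1} j x^{j}$ differentiated. Then the claimed inequality $\dfrac{q^{s}-sq+s-1}{(q-1)^{2}} < q^{s}$ reduces to $q^{s}-sq+s-1 < (q-1)^{2}q^{s}$, i.e. $0 < \big((q-1)^{2}-1\big)q^{s} + sq - (s-1) = (q^{2}-2q)q^{s} + sq-s+1$. For $q=3$ this is $3q^{s}+2s+1>0$, trivially true. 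For $q=2$ the term $q^{2}-2q=0$ vanishes, leaving $2s - s + 1 = s+1 > 0$, also true. So the inequality holds for $q\in\{2,3\}$ and all $s\ge 1$, and by Theorem \ref{restricedpar}'s sibling Theorem \ref{A23} we conclude $P(m,n,A_q)\le F(2n-1)$.

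The main obstacle is purely a bookkeeping one: choosing a form of the elementary estimate (direct closed-form evaluation versus a carefully tuned inductive invariant) that goes through without a messy case analysis. The closed-form route sidesteps this entirely because the coefficient $q^{2}-2q$ of the dominant term is $\ge 0$ exactly for $q\ge 2$, which is precisely the regime we need; there is no real difficulty beyond that observation. I would also remark, as the authors do in Remark \ref{counter-example for p=2}, that this bound $F(2n-1)$ is genuinely weaker than $F(n)$ and is forced by the failure of \eqref{condition*} for $A_2$ (witnessed by $4 = 2\cdot 2 = 2\cdot 1 + 1\cdot 2$), so one cannot hope to do better than Theorem \ref{A23} for $q=2$ by this method.
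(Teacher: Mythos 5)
Your reduction is exactly the paper's: invoke Theorem \ref{A23} after checking that $A_q$ satisfies \eqref{condition23}, which with $a_i=q^{i-1}$ amounts to the elementary inequality $\sum_{j=1}^{s-1} j\,q^{s-1-j}<q^{s}$. Your final verification of that inequality via the closed form $\sum_{j=1}^{s-1} j\,q^{s-1-j}=\frac{q^{s}-sq+s-1}{(q-1)^{2}}$ is correct (I checked the closed form and the reduction to $(q^{2}-2q)q^{s}+s+1>0$, resp.\ $3\cdot 3^{s}+2s+1>0$, for $q=2,3$), so the proposal does establish the corollary. The only genuinely different ingredient is that the paper proves the same inequality by induction on $s$, and your inductive attempt failed only because of an algebra slip: the correct regrouping is
\[
\sum_{j=1}^{s} j\,q^{s-j}=\sum_{j=1}^{s-1} j\,q^{s-1-j}+\sum_{j=1}^{s} q^{s-j},
\]
\emph{without} the extra factor of $q$ you inserted on the first sum. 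With the correct identity the induction closes immediately, since the inductive hypothesis and the geometric tail give a bound $<q^{s}+(q^{s}-1)<2q^{s}\le q^{s+1}$ for $q\ge 2$ --- this is precisely the paper's argument (stated there with index $r=s-2$). So there is no need for the ``sharper inductive invariant'' you were searching for; both routes work, your closed-form computation being slightly more explicit and the paper's induction slightly cleaner. Your closing observation that $F(2n-1)$ cannot be improved to $F(n)$ by this method for $q=2$, via the counterexample $4=2\cdot 2=2\cdot 1+1\cdot 2$, matches Remark \ref{counter-example for p=2}.
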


\begin{proof}
From the last theorem, it suffices to show that $A_q$ satisfies the condition \eqref{condition23} with $q\ge 2$. Indeed, we claim that
for any $r\in\mathbb N$, \[ \sum_{i=1}^{r+1}i\cdot q^{r+1-i}<q^{r+2}. \]
Proceeding by induction on $r$, the base case $r=0$ is obviously true. Assume inductively that the inequality holds for some $r$. Now observe that
\[ \sum_{i=1}^{r+2}i\cdot q^{r+2-i}=\sum_{i=1}^{r+1}i\cdot q^{r+1-i}+\sum_{j=0}^{r+1}q^{r+1-j}\le\sum_{i=1}^{r+1}i\cdot q^{r+1-i}+q^{r+2}-1. \] 
By the inductive hypothesis, the last term is less than $q^{r+2}+q^{r+2}-1<q^{r+3}$; hence completing our inductive proof.
\end{proof}

\begin{remark}
Since $F(n) \le F(2n-1)$, Theorem \ref{A_q>3} immediately implies Corollary \ref{p=2,3} for $q > 3$. It can
also be observed that condition \eqref{condition23} is weaker than \eqref{condition*}, i.e., that any set $A$ satisfying
condition \eqref{condition*} also satisfies condition \eqref{condition23}. The weaker condition allows one to consider a larger
collection of sets $A$, particularly $A_2$ and $A_3$, but at the expense of a bound on $P(m, n,A)$ that is not as good as the original.
\end{remark}

We end this section with an interpretation of our results in terms of $q$-ary partitions, which are partitions of an integer into powers of $q$. It follows that $P(m,n,A_q)$ is the number of $q$-ary partitions of $m$ into at most $n$ parts. Hence, rephrasing the above results, we obtain the following

\begin{corollary}\label{q-ary}
For $m, n\in\mathbb Z^+$, the number of $q$-ary partitions of $m$ into at most $n$ parts is no more than $F(n)$ if $q\ge 4$ or $F(2n-1)$ if $q=2$ or $3$.
\end{corollary}

\section{Cohomology of $SL_2$}\label{cohomology}

The goal in this section is to bound the dimension of cohomology for Weyl modules over $SL_2$ using results from the previous section. For general background of rational cohomology of algebraic groups, the audience may refer to \cite{Jan}. We only introduce here necessary material for our calculations. We use the same notation and conventions as in \cite{LNZ}. In particular, let $k$ be an algebraically closed field of prime characteristic $p>0$. We fix $G=SL_2$ defined over $k$ and a torus subgroup $T$ of $G$. Then the set of dominant weights associated with $T$ can be identified with $\mathbb N$. Weyl modules (over $SL_2$) are indecomposable modules that are parametrized by dominant weights. Explicitly, we denote $V(m)$ the Weyl module of highest weight $m$ for each $m\in\mathbb N$ with $V(0)=k$ the trivial module.  

For $G$-modules $M$ and $N$, $\ext^n_{G}(M,N)$ is the $n$-th degree extension space of $M$ by $N$. When $M=k$, this space is called the $n$-th cohomology space of $G$ with coefficients in $N$ and denoted H$^n(G,N)$. The notation $\dim\opH^n(G,N)$ (or $\dim\ext^n_{G}(M,N)$) denotes the dimension of the cohomology (or extension) as a vector space over $k$. We are interested in estimating an upper bound for these quantities. 

We recall from \cite[Theorem 4.2]{LNZ} that if $p$ is an odd prime, then for any integers $m, n\ge 0$ the dimension $\dim\opH^n(SL_2,V(m))$ is equal to the number of solutions to the system
\begin{align}\label{system for total dim}
\begin{cases}
\displaystyle{2\sum_{i=1}^r a_i+\sum_{j=1}^rb_j=n+1},\\
\displaystyle{b_1+\sum_{i=1}^{r-1}(a_i+b_{i+1})p^i+a_rp^r=\frac{m}{2}+1,}
\end{cases}
\end{align}
where all $a_i$s are in $\mathbb N$, $b_i$ is either $0$ or $1$.\footnote{Note that there is an abuse of notation here. All the $a_i$s are now not elements of $A$ as in the previous part of the paper. Instead, these $a_i$s (and $b_i$s) are variables in this context.} Here $r$ is a sufficiently large integer in term of $m$. Note that the system \eqref{system for total dim} has no solutions if $m$ is odd. Therefore, whenever considering the cohomology $\opH^n(SL_2,V(m))$ we are only interested in the case when $m$ is even. 

Let $N(m,n)$ be the number of solutions to the system 
\begin{align}\label{system1}
\begin{cases}
\displaystyle{2\sum_{i=1}^r a_i+\sum_{j=1}^rb_j=n},\\
\displaystyle{b_1+\sum_{i=1}^{r-1}(a_i+b_{i+1})p^i+a_rp^r=m.}
\end{cases}
\end{align}
Then we can deduce that $\displaystyle\dim\opH^n({SL_2},V(m))=N\left(\frac{m}{2}+1,n+1\right)$ for $m,n\in\mathbb N$. We next prove the main result of this section, which strengthens \cite[Proposition 4.4]{LNZ} as we are now able to remove the condition $n\le 2p-3$. 

\begin{theorem}\label{casep>3}
Assume $p\ge 5$. For all integers $m, n\ge 0$, we have 
\[ \dim\opH^n(SL_2,V(m))\le F(n+1). \]
\end{theorem}

\begin{proof}
From earlier set up, we need to prove that $N\left(\frac{m}{2}+1,n+1\right)\le F(n+1)$ for every integer $n\ge 0$ and even integer $m\ge 0$. This is then reduced to showing that $N(m,n)\le F(n)$ for all positive integers $m, n$ (with $m$ replacing $\frac{m}{2}+1$ and $n$ replacing $n+1$). We again proceed by an inductive argument on $n$. By \cite[Proposition 3.6]{LNZ}, the last inequality holds for $n\le 8$. For any positive $n\ge 9$, assume that the inequality holds up to $n-1$. Since $b_1$ is either $0$ or $1$, we must have $m$ is congruent to either $0$ or $1$ modulo $p$, for otherwise, $N(m,n)=0$ for all $n$. If $m\equiv 1$ (mod $p$), then $b_1=1$ and $N(m,n)=N(m-1,n-1)\le F(n-1)$ by the inductive hypothesis. Suppose that $p$ divides $m$, so $b_1$ must be zero. Let $s$ be the least integer such that $p^s\le m$. Then the system \eqref{system1} is reduced to
\begin{align}\label{p|m}
\begin{cases}
\displaystyle{2\sum_{i=1}^{s} a_i+\sum_{j=2}^{s+1}b_j=n},\\
\displaystyle{\sum_{i=1}^{s}(a_i+b_{i+1})p^i=m.}
\end{cases}
\end{align}
Let $S$ be the set of solutions to this system. Set
\[ U=\left\{(a,b)=(a_1,\ldots,a_s,b_2\ldots,b_{s+1}): (a,b) \text{~satifies \eqref{p|m}}, ~\exists i_0 \text{~so that~} a_{i_0}\ge 1, b_{i_0+1}=0  \right\}.\]
Then $S$ is the union of the disjoint subsets $U$ and $V=S\setminus U$. We now give an upper bound to each set. For each solution $(a,b)$ in $U$, we choose the largest such $i_0$ and make a replacement $a_{i_0}\mapsto a_{i_0}-1$ and $b_{i_0+1}\mapsto 1$. The resulting tuple is a solution of 
\begin{align*}
\begin{cases}
\displaystyle{2\sum_{i=1}^{s} a_i+\sum_{j=2}^{s+1}b_j=n-1},\\
\displaystyle{\sum_{i=1}^{s}(a_i+b_{i+1})p^i=m.}
\end{cases}
\end{align*}
This replacement is a one-to-one mapping from $U$ to the set of solutions to the system above. It follows from the inductive hypothesis that $$|U|\le N(m,n-1)\le F(n-1).$$ 

Every solution $(a,b)$ in $V$ satisfies the condition that whenever $a_i>0$, $b_{i+1}=1$. Setting $d_i=a_i+b_{i+1}$ for all $1\le i\le s$, we can see that each solution $(a,b)$ in $V$ is one-to-one mapping to a solution $(d_1,\ldots,d_s)$ to the system
\begin{align}\label{d-system}
\begin{cases}
\displaystyle{\sum_{i=1}^{s} d_i\le n },\\
\displaystyle{\sum_{i=1}^{s}d_ip^i=m,}
\end{cases}
\end{align}
where the inequality is obtained from rewriting the first equation of \eqref{p|m} to $\displaystyle{\sum_{i=1}^{s} d_i=n-\sum_{i=1}^{s}a_i}$. Consider the following cases.

\begin{itemize}
\item If $\displaystyle \sum_{i=1}^{s}a_i\le 2$ then $d_i\le a_i+1\le 3<p$ for all $1\le i\le s$ (recall that $p\ge 5$). Hence, the sum $\displaystyle \sum_{i=1}^{s}d_ip^i$ must be the $p$-adic expansion of $m$ and so there is at most one solution to the system \eqref{d-system} for the case when $\displaystyle n-2\le \sum_{i=1}^{s} d_i\le n$.
\item If $\displaystyle \sum_{i=1}^{s}a_i\ge 3$ then $\displaystyle{\sum_{i=1}^{s} d_i=n-\sum_{i=1}^{s}a_i}\le n-3$. By Theorem \ref{A_q>3}, the number of solutions to \eqref{d-system}, restricted to this case, is bounded by $F(n-3)$.
\end{itemize}
Summing up the two cases, we have $|V|\le 1+F(n-3) \le F(n-2)$. Therefore, we obtain
\[ N(m,n)=|S|=|U|+|V|\le F(n-1)+F(n-2)=F(n),  \]
which completes our inductive proof.
\end{proof}

We believe that the theorem also hold for $p=3$. Unfortunately, our method does not work with this small prime. A different approach might be needed to tackle this case. Using the same idea as in the previous section, we can only prove the following

\begin{proposition}
If $p=3$, then we have for all integers $m\ge 0, n\ge 1$ 
\[ \dim\opH^n(SL_2,V(m))\le F(2n). \]
\end{proposition}

\begin{proof}
Same argument as in the proof of the last theorem, we reduce to showing that $N(m,n)\le F(2n-2)$ for all integers $m\ge 1, n\ge 2$. Recall that $N(m,n)$ is the number of solutions to the system \eqref{p|m}. Again, by \cite[Proposition 3.6]{LNZ}, the inequality holds for $n\le 4$. For $n\ge 5$, we assume that the inequality holds up to $n-1$. For \eqref{p|m} to have solutions, we must have either one (or more) of the following conditions
\begin{align*}
a_s+b_{s+1}\ge 1, a_{s-1}+b_s\ge 2, a_{s-2}+b_{s-1}\ge p+1, a_{s-3}+b_{s-2}\ge p+2,\\
a_{s-4}+b_{s-3}\ge p^2-p+1, a_{s-5}+b_{s-4}\ge p^2+1, a_{s-6}+b_{s-5}\ge p^3+1,
\end{align*}
and $a_{s-i}+b_{s-i+1}\ge p^{\lfloor\frac{i}{2}\rfloor}+1$ for all $i\ge 6$. For otherwise, we would have
\[ \sum_{i=1}^{s}(a_i+b_{i+1})p^i\le 2(p^{s-1}+p^{s-2}+\cdots+1)\le p^s-1<p^s\le m.  \]
Let $N_i$ be the number of solutions to the system \eqref{p|m} restricted to each of these conditions respectively. Then it is easy to see that $N(m,n)$ is no more than the sum of all these $N_i$s. We next consider each $N_i$.

If $a_{s}+b_{s+1}\ge 1$, then there are 2 cases:
\begin{itemize}
\item $b_{s+1}=0$ and $a_s\ge 1$. Then the system \eqref{p|m} can be rewritten to  
\begin{align*}
\begin{cases}
\displaystyle{2\sum_{i=1}^{s-1} a_i+\sum_{j=2}^{s}b_j=n-2a_s},\\
\displaystyle{\sum_{i=1}^{s-1}(a_i+b_{i+1})p^i=m-a_sp^{s}.}
\end{cases}
\end{align*}
Hence, by the inductive hypothesis the number of solutions in this case is
\[ \sum_{a_s=1}^{\lceil\frac{m}{p^s}\rceil}N(m-a_sp^s,n-2a_s)\le \sum_{a_s=1}^{\lceil\frac{m}{p^s}\rceil}F(2n-4a_s-2). \]

\item $b_{s+1}=1$ and $a_s\ge 0$. Then the system \eqref{p|m} can be rewritten to  
\begin{align*}
\begin{cases}
\displaystyle{2\sum_{i=1}^{s-1} a_i+\sum_{j=2}^{s}b_j=n-1-2a_s},\\
\displaystyle{\sum_{i=1}^{s-1}(a_i+b_{i+1})p^i=m-p^s-a_sp^{s}.}
\end{cases}
\end{align*}
Again, by the inductive hypothesis the number of solutions in this case is
\[ \sum_{a_s=0}^{\lceil\frac{m}{p^s}\rceil}N(m-p^s-a_sp^s,n-1-2a_s)\le \sum_{a_s=0}^{\lceil\frac{m}{p^s}\rceil}F(2n-4a_s-4). \]
\end{itemize}
Now summing up theses 2 cases and using Lemma \ref{lem}, we have
\[ N_1\le \sum_{i=2}^{\lceil\frac{m}{p^s}\rceil}F(2n-2i)\le F(2n-3).  \]

If $a_{s-1}+b_s\ge 2$ then $a_{s-1}\ge 1$. Now replacing $a_{s-1}$ by $a'_{s-1}+1$ with $a'_{s-1}\ge 0$ we have \eqref{p|m} rewritten to
\begin{align*}
\begin{cases}
\displaystyle{2\sum_{i=1}^{s-2} a_i+2a'_{s-1}+2a_s+\sum_{j=2}^{s+1}b_j=n-2},\\
\displaystyle{\sum_{i=1}^{s-2}(a_i+b_{i+1})p^i+(a'_{s-1}+b_{s})p^{s-1}+(a_s+b_{s+1})p^s=m-p^{s-1}.}
\end{cases}
\end{align*}
Hence, there are $N_2=N(m-p^{s-1},n-2)$ solutions in this case. Similar argument can be applied to obtain
\begin{itemize}
\item $N_3=N(m-p^{s-1}, n-2p)$
\item $N_4=N(m-p^{s-2}-p^{s-3}, n-2p-2)$
 $\cdots$ 
\item $N_{s}=N(m-p^{\lfloor\frac{s}{2}\rfloor},n-2p^{\lfloor\frac{s}{2}\rfloor})$.
\end{itemize}
Now using the inductive hypothesis and Lemma \ref{lem}, we obtain
\begin{align*}
N(m,n) &\le  F(2n-3)+F(2n-6)+F(2n-4p-2)+\cdots+F(2n-4p^{\lfloor\frac{s}{2}\rfloor}-2)\\
&\le F(2n-2),
\end{align*}
completing our inductive proof.
\end{proof}

\begin{remark}
Theorem \ref{casep>3} does not hold for $p=2$. Indeed, from \cite[Corollary 3.2.2]{EHP}, the dimension of $\opH^n(SL_2, V(m))$, for any $m, n\in\mathbb N$, is equal to the number of solutions $(a_1,\ldots,a_r)\in\mathbb N^r$ of the system
\begin{align}\label{p=2}
\begin{cases}
a_1+a_2+\cdots+a_r=n+1,\\
a_12+a_22^2+\cdots+a_r2^r=\frac{m}{2}+1.
\end{cases}
\end{align}
In the case when $m=286, n=4$, there are 6 solutions to the system \eqref{p=2} as follows
\begin{itemize}
\item $1\cdot 2^4+4\cdot 2^5=144$
\item $3\cdot 2^4+1\cdot 2^5+1\cdot 2^6=144$
\item $2\cdot 2^3+2\cdot 2^5+1\cdot 2^6=144$
\item $2\cdot 2^2+1\cdot 2^3+2\cdot 2^6=144$
\item $4\cdot 2^2+1\cdot 2^7=144$
\item $2\cdot 2^1+1\cdot 2^2+1\cdot 2^3+1\cdot 2^7=144$.
\end{itemize}
Hence, $\dim\opH^4(SL_2, V(286))=6>F(5)$. Instead, we can have the same bound as for the case $p=3$ as follows.
\end{remark}

\begin{corollary}
If $p=2$, then for all integers $m\ge 0, n\ge 1$ 
\[  \dim\opH^n(SL_2,V(m))\le F(2n). \]
\end{corollary}

\begin{proof}
By Corollary \ref{p=2,3}, it's straightforward to have
\[  \dim\opH^n(SL_2, V(m))\le P\left(\frac{m}{2}+1,n+1, A_2\right)\le F(2n+1). \]
In fact, Corollary \ref{p=2,3} implies that
\[  \sum_{i=0}^n\dim\opH^n(SL_2, V(m))\le F(2n+1). \]
It is possible to lower the bound for $\dim\opH^n(SL_2, V(m))$ to $F(2n)$. Indeed, let $M(\frac{m}{2}+1,n+1)$ be the number of solutions to the system \eqref{p=2}. Similar inductive argument as in the proof of Theorem \ref{A23} may be used to establish 
\[ M(m,n)\le F(2n-2) \] 
for all integers $m\ge 1, n\ge 2$, which is sufficient to show the corollary. 
\end{proof}

\begin{remark}
Using exact same proof for \cite[Theorem 5.4]{LNZ} with Theorem \ref{casep>3} replacing \cite[Proposition 4.4]{LNZ} (following that the condition $n\le 2p-3$ can be removed), we can prove that for $p\ge 5$ \[  \dim\ext^n_{SL_2}(V(m_2), V(m_1))\le F(n+1)+(s-1)F(n). \] 
for $m_1, m_2, n\in\mathbb N$, and $s$ the least positive integer such that $m_2<p^s$. This is not a significant bound as it is not sharp even for the low degree $n$. For example, we have 
\[ \dim\ext^n_{SL_2}(V(m_2), V(m_1))\le n  \]
for $n\le 3$, see \cite[Section 5.1]{LNZ} for details. Finding a sharp bound, for large values of $n$, of these extension spaces is still an open problem. We propose the following
\end{remark}

\begin{conjecture}
For $m_1, m_2\in\mathbb N$, and $p\ge 3$, we have
\[  \dim\ext^n_{SL_2}(V(m_2), V(m_1))\le F(n+1). \] 
\end{conjecture}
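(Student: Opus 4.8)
The plan is to mimic the strategy of Theorem~\ref{casep=3}: replace the extension group by a count of integer solutions to a linear system, and then run a Fibonacci induction on the cohomological degree. The first task is to promote \cite[Theorem 5.4]{LNZ} (whose proof, as the preceding remark notes, works without the hypothesis $n\le 2p-3$) to an exact solution-counting formula for the general Ext. Concretely, for fixed $m_1,m_2\in\mathbb N$ with $m_2<p^s$, one expects
\[ \dim\ext^n_{SL_2}(V(m_2),V(m_1))=M\bigl(g(m_1,m_2),\,n+1\bigr), \]
where $M(m,n)$ counts the solutions $\{(a_i),(b_i)\}$ — with $a_i\in\mathbb N$ and each $b_i$ ranging over a small finite set read off from the $p$-adic digits of $m_2$ — of a system of the shape $2\sum a_i+\sum b_j=n$, $\;b_1+\sum(a_i+b_{i+1})p^i+a_rp^r=m$, and $g(m_1,m_2)$ is the appropriate target built linearly from the $p$-adic data (for $m_2=0$ this must recover \eqref{system1}, with $g(m_1,0)=\tfrac{m_1}{2}+1$). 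The conjecture then follows once one shows $M(m,n)\le F(n)$ for all positive $m,n$, after the degree shift.

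I would prove $M(m,n)\le F(n)$ by induction on $n$. The base cases $n=1,2$ should be immediate, since different choices of the few leading variables force different values of the $p$-adic target $m$, giving $M(m,1),M(m,2)\le 1$. For the inductive step, fix $m$, take its $p$-adic expansion $m=c_1p+\cdots+c_sp^s$ with $c_s\ne 0$, set $d_i=a_i+b_{i+1}$, and apply the inequality of Corollary~\ref{p=2,3} exactly as in the proof of Theorem~\ref{casep=3} to rule out the simultaneous occurrence of $d_s=0,\ d_{s-1}=0,\ d_{s-2}\le 1,\dots,d_1\le s-2$. Hence every solution falls into at least one of the branches $d_s\ge 1,\ d_{s-1}\ge 1,\ d_{s-2}\ge 2,\ d_{s-3}\ge 3,\dots$; each branch, after subtracting the appropriate multiple of a power of $p$ from $m$ and lowering the degree by $2,2,4,6,\dots$, becomes a strictly smaller instance of $M$, and summing the resulting estimates $F(n-2)+F(n-2)+F(n-4)+F(n-6)+\cdots$ collapses via Lemma~\ref{lem} to $F(n-2)+F(n-1)=F(n)$.

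The step I expect to be the real obstacle — and the reason this is still only a conjecture — is the low-index bookkeeping once $m_2\ne 0$. In Theorem~\ref{casep=3} it was crucial that $b_1\in\{0,1\}$ forces the dichotomy $p\mid m$ or $m\equiv 1\pmod p$, which keeps the number of surviving branches Fibonacci-admissible; when $m_2\ne 0$ the bottom variables may take extra values and the constraint ``$d_i=0\Rightarrow b_{i+1}=0$'' interacts with the digits of $m_2$, so several branches can be simultaneously nonempty and a naive sum of branch counts overshoots $F(n)$ — roughly by the spurious $(s-1)F(n)$ of the crude bound. Closing this gap seems to require carrying a suitable truncation of $m_2$ as an auxiliary parameter through the induction and proving that, for a \emph{fixed} pair $(m_1,m_2)$, the compatibility constraints coming from the digits of $m_2$ are mutually exclusive across the competing branches, so that effectively only the branching pattern of Theorem~\ref{casep=3} survives. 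A shortcut via a tensor identity reducing to the already-proven cohomology case does not appear available, since $V(m_2)^{\ast}$ is a dual Weyl module rather than a Weyl module, and so the argument does not collapse back onto the system \eqref{system for total dim}.
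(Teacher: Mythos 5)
The statement you are addressing is stated in the paper as a \emph{conjecture}: the authors offer no proof of it, and explicitly describe the sharp bound for $\ext^n_{SL_2}(V(m_2),V(m_1))$ as an open problem (the best they record is $F(n+1)+(s-1)F(n)$, obtained by rerunning the proof of \cite[Theorem 5.4]{LNZ} without the restriction $n\le 2p-3$). Your proposal does not close this gap, and in fact you say so yourself. The first and most serious missing piece is the assumed identity $\dim\ext^n_{SL_2}(V(m_2),V(m_1))=M\bigl(g(m_1,m_2),n+1\bigr)$: nothing in \cite{LNZ} supplies an \emph{exact} solution-counting model for the general Ext group analogous to the system \eqref{system for total dim} for cohomology, and without such a model the entire Fibonacci induction has no combinatorial object to act on. You posit the shape of the system and of $g(m_1,m_2)$ by analogy, but an analogy is not a derivation; the extra $(s-1)F(n)$ in the known bound is precisely the signal that the naive system overcounts when $m_2\neq 0$, so one cannot simply assert that the count collapses to the $m_2=0$ pattern.

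The second gap is the one you flag at the end: even granting a solution-counting model, you would need the branches created by the digits of $m_2$ to be mutually exclusive for a fixed pair $(m_1,m_2)$, so that the branch sum stays within $F(n-2)+F(n-1)$. You state this as something that ``seems to require'' proof and offer no argument for it. Since the inductive step of Theorem \ref{casep=3} succeeds only because $b_1\in\{0,1\}$ forces the clean dichotomy $p\mid m$ versus $m\equiv 1\pmod p$, and you concede that this dichotomy breaks when $m_2\neq 0$, the heart of the conjecture is exactly the step you leave open. What you have written is a plausible research plan that correctly identifies where the difficulty lies --- which is consistent with the authors' decision to leave the statement as a conjecture --- but it is not a proof, and no amount of reorganization of the two paragraphs you give would make it one without new input at both of the points above.
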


\begin{remark}
The same arguments as in \cite[Section 6.2]{LNZ} can show, in the case when $p\ge 5$, that both $\dim\opH^n(SL_2,L)$ and $\dim\opH^n(SL_2(\mathbb F_{p^s}),L')$ are bounded by $(2n+7)F(n)$, where $L$ (resp. $L'$) is any simple module over $SL_2$ (resp. the finite group of Lie type $SL_2(\mathbb F_{p^s})$ for any $s\ge 1$). Again, this is an improvement of results in \cite[Section 6.2]{LNZ}, but it is not a sharp upper bound even with small values of $n$.  

\end{remark}

\section*{acknowledgement}
We thank Sergey Fomin for valuable suggestions improving the manuscript. We are also grateful to the anonymous referee for pointing out errors in previous versions and providing many useful comments on the writing and organization of the paper. 


\providecommand{\bysame}{\leavevmode\hbox to3em{\hrulefill}\thinspace}

\end{document}